\documentclass[12]{amsart}
\usepackage{amssymb} 
\newtheorem{thm}{Theorem}[section]
\newtheorem{cor}[thm]{Corollary}
\newtheorem{lem}[thm]{Lemma}

\newtheorem{rem}[thm]{Remark}

\theoremstyle{question}
\newtheorem{qu}[thm]{Question}

\numberwithin{equation}{section} 

\newcommand{\Z}{\mathbb{Z} }
\newcommand{\N}{\mathbb{N} }

\begin{document}
\author[Abdollahi and Khosravi]{A. Abdollahi  \;\;\&\;\; H. Khosravi}
\title[Right and left Engel element]{On the right and left $4$-Engel elements}
\address{$^\mathbf{1}$Department of Mathematics, University of Isfahan, Isfahan 81746-73441, Iran}%
\address{$^\mathbf{2}$School of Mathematics, Institute for Research in Fundamental Sciences (IPM), P.O.Box: 19395-5746, Tehran, Iran.}%
\email{($^\mathbf{1,2}$A. Abdollahi) \;\; a.abdollahi@math.ui.ac.ir \;\ abdollahi@member.ams.org}%
\email{hassan\_khosravy@yahoo.com}
\subjclass[2000]{20F45; 20F12}
\keywords{Right Engel elements; Left Engel elements; Hirsch-Plotkin radical of a group; Baer radical of a group; Fitting subgroup}
\thanks{The first author's research was in part supported by a grant from IPM (No. 87200118).}
\begin{abstract}
In this paper we study left and right 4-Engel elements of a group.
In particular, we prove that $\langle a, a^b\rangle$ is nilpotent
of class at most 4, whenever $a$ is any element and $b^{\pm
1}$ are right 4-Engel elements or $a^{\pm 1}$ are left 4-Engel
elements and $b$ is an arbitrary element of $G$. Furthermore we
prove that for any prime $p$ and any element $a$ of finite
$p$-power order in a group $G$ such that $a^{\pm 1}\in L_4(G)$,
$a^4$, if $p=2$, and $a^p$, if $p$ is an odd prime number, is in
the Baer radical of $G$.
\end{abstract}
\maketitle
\section{\bf Introduction and Results}
Let $G$ be any group and $n$ be a non-negative integer. For any two
 elements $a$ and $b$ of $G$, we define
inductively $[a,_n b]$, the $n$-Engel commutator of the pair
$(a,b)$, as follows:
$$[a,_0 b]:=a,~ [a,b]=[a,_1 b]:=a^{-1}b^{-1}ab \mbox{ and }[a,_n
b]=[[a,_{n-1} b],b]\mbox{ for all }n>0.$$ An element $x$ of $G$ is
called right $n$-Engel if $[x,_ng]=1$ for all $g\in G$. We denote
by $R_n(G)$, the set of all right $n$-Engel elements of $G$. The
corresponding subset to $R_n(G)$ which can be similarly defined is
$L_n(G)$ the set of all left $n$-Engel elements of $G$ where an
element $x$ of $G$ is called  left $n$-Engel element if $[g,_n
x]=1$ for all $g\in G$. A group $G$ is called $n$-Engel if
$G=L_n(G)$ or equivalently $G=R_n(G)$. It is clear that
$R_0(G)=1$, $R_1(G)=Z(G)$ the center of $G$, and by a result of
Kappe \cite{kappe}, $R_2(G)$ is a
characteristic subgroup of $G$. Also we have $L_0(G)=1$,
$L_1(G)=Z(G)$ and it can be easily seen that
$$L_2(G)=\{x\in G\;|\; \langle x\rangle^G ~ \text{is abelian} \},$$
where $\langle x\rangle^G$ denotes the normal closure of $x$ in
$G$. In \cite{ab1} it is shown that
$$L_3(G)=\{x\in G \;|\; \langle x,x^y\rangle \in \mathcal{N}_2 ~\text{ for all }~y\in G\},$$
where $\mathcal{N}_2$ is the class of nilpotent groups of class at
most 2. Also it is proved that $\langle x,y\rangle$ is nilpotent
of class at most 4 whenever $x,y\in L_3(G)$.
Newell \cite{newell} has shown that the normal closure of every
element of $R_3(G)$ in $G$ is nilpotent of class at most 3. This shows that $R_3(G)\subseteq Fit(G)$, where $Fit(G)$ is the
Fitting subgroup of $G$, and in particular it is contained in
$B(G)\subseteq HP(G)$ where $B(G)$ and $HP(G)$ are the Baer
radical and Hirsch-Plotkin radical of $G$, respectively. It is
clear that
$$R_0(G)\subseteq R_1(G)\subseteq R_2(G)\subseteq\cdots \subseteq
R_n(G)\subseteq\cdots. $$ Gupta and Levin \cite{gupta} have  shown that
the normal closure of an element in a 5-Engel group need not be
nilpotent (see also \cite{vaghan} p. 342). Therefore
$R_n(G)\nsubseteq Fit(G)$ for $n\geq 5$. The following question naturally
arises:
\begin{qu}\label{q1} Let $G$ be an arbitrary group. What are the least positive
integers $n$, $m$ and $p$ such that $R_n(G)\nsubseteq Fit(G)$, $R_m(G)\nsubseteq B(G)$ and $R_p(G)\nsubseteq HP(G)$?
\end{qu}
To find integer $n$ in   Question \ref{q1} we have to answer the
following.
\begin{qu} Let $G$ be an arbitrary group. Is it true that
$R_4(G)\subseteq Fit(G)$?
\end{qu}
 Although in \cite{ab1} it is shown
that there exists $n \in \N$ such that $L_n(G)\nsubseteq HP(G)$,
the following question   is still open.
\begin{qu} Let $G$ be an arbitrary group. What is the least positive
integer $k$  such that $L_k(G)\nsubseteq HP(G)$?
\end{qu}
In this
paper we study right and left 4-Engel elements. Our main results are the following.
\begin{thm}\label{th3}
Let $G$ be any group. If $a\in G$  and $b^{\pm 1}\in R_4(G)$, then
$\langle a,a^b\rangle$ is nilpotent of class at most $4$.
\end{thm}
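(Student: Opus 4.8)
The plan is to turn the hypothesis into commutator relations inside $H:=\langle a,a^b\rangle$ and then show by commutator calculus that $\gamma_5(H)=1$. First I would reformulate: since $a^b=a[a,b]$, putting $t:=[a,b]$ gives $H=\langle a,t\rangle$, with $[b,a]=t^{-1}$. The Engel hypothesis $[b,_4 a]=1$ then reads $[t^{-1},_3 a]=1$, i.e.\ one nilpotency relation for the action of $a$ on $t$; this alone is far from enough to bound the class of the $2$-generator group $\langle a,t\rangle$, so the full force of $[b,_4 g]=1$ and $[b^{-1},_4 g]=1$ for all $g$ will be needed, the latter precisely to handle the sign discrepancies that arise when $t^{-1}=[b,a]$ is commuted past $a$.

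Next I would reduce to the finitely generated nilpotent case. Nilpotency of class at most $4$ for the $2$-generator group $H$ is the conjunction of the vanishing of the finitely many basic commutators of weight $5$ in $a$ and $a^b$; each is a word in $a,b$ lying in the finitely generated subgroup $\langle a,b\rangle$. A standard reduction for Engel-type problems---passing to a finitely generated subgroup carrying a putative nontrivial weight-$5$ commutator and then to a nilpotent quotient, which preserves the right $4$-Engel hypotheses on $b^{\pm1}$---lets me assume $\langle a,b\rangle$, and hence $H$, is finitely generated nilpotent. This legitimises finite commutator collection and the usual splitting into the torsion-free and prime-power cases.

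The heart of the argument is the commutator calculus. Linearising $[b,_4 g]=1$ through the substitutions $g\mapsto g_1g_2,\ g_1g_2g_3,\dots$ yields the symmetrised relations
\[
\prod_{\sigma}[b,g_{\sigma(1)},g_{\sigma(2)},g_{\sigma(3)},g_{\sigma(4)}]=1,
\]
valid modulo commutators of higher weight for all choices of $g_i\in\langle a,b\rangle$, and the same applied to $b^{-1}$ supplies the sign-corrected companions. Specialising the $g_i$ to $a$ and to its conjugates, and repeatedly rewriting $[b,a]=t^{-1}$, I would express each basic weight-$5$ commutator in $a$ and $t$ as a product of left-normed commutators of the form $[b,a,a,a,a]$ and their $b^{-1}$-analogues, all of which vanish by the Engel condition together with the derived symmetry relations. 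Showing that every weight-$5$ word in $a,t$ reduces in this way to already-trivial expressions is what forces $\gamma_5(H)=1$.

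I expect the main obstacle to be exactly this bookkeeping: there are many basic weight-$5$ commutators on two generators, and controlling them requires the Hall--Witt identity and the full strength of the symmetrised identities rather than the raw Engel laws. The hypothesis $b^{-1}\in R_4(G)$ should be indispensable here, since the sign corrections it provides cannot otherwise be absorbed, and a secondary difficulty is verifying uniformity of the final computation across the torsion-free case and the $p$-groups, where the small primes (notably $p=2$ and $p=3$) are likely to need separate attention.
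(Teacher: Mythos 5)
Your proposal is a programme rather than a proof, and two of its structural steps do not hold up. First, the reduction to a finitely generated nilpotent quotient is unjustified: if $\gamma_5(\langle a,a^b\rangle)\neq 1$, there is no reason a nilpotent quotient of $\langle a,b\rangle$ should detect this, since $\langle a,b\rangle$ need not be residually nilpotent, and the right $4$-Engel hypothesis gives you no such residual property. The theorem is about a completely arbitrary group, and the paper's proof never leaves that setting. Second, and more seriously, the linearisation step cannot do the work you assign to it. The symmetrised relations $\prod_{\sigma}[b,g_{\sigma(1)},\dots,g_{\sigma(4)}]=1$ hold only modulo commutators of higher weight \emph{in $\langle a,b\rangle$}, but the target is the exact vanishing of the weight-$5$ basic commutators of $H=\langle a,a^b\rangle$, and weight in $H$ is incommensurate with weight in $\langle a,b\rangle$ (the generator $a^b=a[a,b]$ of $H$ carries components of weight $1$ and $2$ in $\langle a,b\rangle$). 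A sharp class bound of this kind needs exact identities, not congruences modulo higher terms; the error terms you propose to discard are precisely of the weight you are trying to kill. The $2$- and $3$-torsion difficulties you flag at the end are real and are left unresolved.

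For contrast, the paper's route avoids both problems by converting the finitely many hypotheses actually needed, namely $[b^{\epsilon},a^{\tau},a^{\tau},a^{\tau},a^{\tau}]=1$ for $\epsilon,\tau\in\{\pm1\}$ and for every choice of $a$, into \emph{exact} statements: $\langle a,[a^b,a]\rangle$ and $\langle a^b,[a^b,a]\rangle$ are nilpotent of class at most $2$ (Lemma 2.1 and Corollary 2.2, via the identity $[b^{-1},_n a]=1\Leftrightarrow[a^{-1},_{n-1}a^b]=1$). Sims's theorem then reduces $\gamma_5(H)=1$ to the vanishing of six basic commutators, five of which die at once, leaving $[a^b,a,a,a^b,a^b]=[x,x^{aa^b}]^{x^{aa^b}}$ with $x=[a^b,a]$. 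The last one is handled by an exact exponent calculation that crucially re-applies the universally quantified hypothesis with $a$ replaced by $a^{-1}$, giving $\langle a,x^{-a^b+2}\rangle\in\mathcal{N}_2$ and hence three relations forcing $[x^{a^b},x^a]$ to have order dividing both $2$ and $3$, so it is trivial. Your sketch does gesture at the right ingredients (both signs of $b$, the hypothesis for all $a$, basic commutators of weight $5$), but without the exact reformulation and the explicit calculation the argument does not close.
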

\begin{thm}\label{th4}
Let $G$ be an arbitrary group and $a^{\pm 1}\in L_4(G)$. Then  $\langle a,a^b\rangle$ is nilpotent of class at most
$4$ for all $b\in G$.
\end{thm}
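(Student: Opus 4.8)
The plan is to reduce the assertion to a finite commutator computation in a suitable relatively free group and to carry it out using the defining identities of $L_4(G)$, in close analogy with the argument for Theorem \ref{th3}. First I would record the two structural facts that make the statement tractable: the set $L_4(G)$ is closed under conjugation, since $[g,_4 x^h]=[g^{h^{-1}},_4 x]^h$, and by hypothesis it is also closed under inversion for the element at hand. Consequently all four elements $a$, $a^{-1}$, $a^b$ and $(a^b)^{-1}=(a^{-1})^b$ lie in $L_4(G)$, so inside $N:=\langle a,a^b\rangle$ each generator and its inverse is a left $4$-Engel element. Writing $y:=a^b=a[a,b]$, the goal becomes $\gamma_5(N)=1$, i.e. every commutator of weight $5$ in $a$ and $y$ is trivial; since $N$ is $2$-generated it suffices to verify this in the relevant relatively free group on $a,b$.

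The relations I would feed in come from the identity $[g,_4 a]=1$ and its companion $[g,_4 a^{-1}]=1$, applied to a carefully chosen supply of elements $g$. Taking $g=b$ yields $[b,_4 a]=1$, and taking $g$ to be the various left-normed commutators in $a,b$ that arise during collection yields the further relations needed to push all weight-$5$ commutators in $a,y$ into the trivial subgroup. Because $y=a[a,b]$, every commutator in $a,y$ rewrites as a commutator in $a,b$ in which $b$ enters only through $[a,b]$; I then expect the collection scheme of Theorem \ref{th3} to carry over, now driven by the Engel identities for the conjugated element $a^{\pm1}$ rather than for the conjugator $b^{\pm1}$, with the relevant commutator lemmas quoted or mirrored accordingly.

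It is worth stressing why the theorem does \emph{not} follow from Theorem \ref{th3} by a formal left/right duality. The naive hope would be that $a^{\pm1}\in L_4(G)$ implies $a^{\mp1}\in R_4(G)$, reducing the claim to the right-Engel case; but inversion does not interchange $L_n$ and $R_n$. Indeed, already for $n=2$ one can take $A=\mathbb{Z}^3$ with an automorphism $\phi$ acting as a single unipotent Jordan block, choose $a$ a cyclic vector for $\phi$, and form $A\rtimes\langle g\rangle$ with $g$ acting by $\phi$: here $\langle a\rangle^G=A$ is abelian, so $a^{\pm1}\in L_2(G)$, yet $(\phi-1)^2\neq0$ forces $a^{-1}\notin R_2(G)$. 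Hence a genuine computation in $N$ is unavoidable, and the hypothesis that \emph{both} $a$ and $a^{-1}$ are left $4$-Engel will be needed, both the reduction $[\,\cdot\,,_4 a]=1$ and the reduction $[\,\cdot\,,_4 a^{-1}]=1$ entering the collection.

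The main obstacle is the commutator calculus itself: organizing the collection so that the weight-$5$ commutators in $a$ and $y=a[a,b]$ are all shown to vanish, while only weight-$\le 4$ information survives. I expect this to be the technical heart of the proof, exactly as in Theorem \ref{th3}; in practice I would either carry out the collection by hand, applying the left-Engel reductions at each step, or verify the resulting finite set of identities in the free nilpotent group of class $4$ on two generators. The bound $4$ should then come out sharp, matching the $L_3$ result of \cite{ab1} that $\langle x,y\rangle$ has class at most $4$ for $x,y\in L_3(G)$.
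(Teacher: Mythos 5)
Your reduction to the two-generated relatively free setting is legitimate, and you correctly identify that both hypotheses $[\,\cdot\,,_4 a]=1$ and $[\,\cdot\,,_4 a^{-1}]=1$ must enter; but the proposal has two genuine gaps at exactly the points where the paper has to work hardest. The first is the passage from ``finitely many weight-$5$ commutators vanish'' to $\gamma_5(\langle a,a^b\rangle)=1$. Collection arguments of the kind you describe only show that the basic commutators of weight $5$ generate $\gamma_5$ \emph{modulo} $\gamma_6$, so killing them yields $\gamma_5=\gamma_6=\gamma_7=\cdots$, which is no conclusion at all in a group that is not known to be nilpotent. Your two proposed verification methods both founder on this: checking identities ``in the free nilpotent group of class $4$ on two generators'' is vacuous (every weight-$5$ commutator is already trivial there), and checking them in nilpotent quotients (the nq-style computation) proves nothing about an arbitrary group $G$, since $G$ is a quotient of the relatively free group but not a nilpotent one. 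The paper bridges this gap with Sims' theorem (Theorem \ref{sim}): in the free group of rank $2$, $\gamma_5$ equals the \emph{normal closure} of the six basic commutators of weight $5$, not merely their span modulo $\gamma_6$. This nontrivial external input, invoked in Remark \ref{rem} and Lemma \ref{lm2}, is entirely absent from your plan and cannot be dispensed with.

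The second gap is your assumption that ``the collection scheme of Theorem \ref{th3} carries over.'' It does not: the relations available from the hypothesis applied to the fixed pair $(a,b)$ (Corollary \ref{co2}) kill only five of the six basic commutators, namely $x_1,x_2,x_4,x_5,x_6$ in the paper's notation, and leave $x_3=[a^b,a,a,a^b,a^b]$ untouched; this is why Theorem \ref{th3} and Theorem \ref{th4} require different proofs beyond that point. To kill $x_3$ the paper exploits the one resource the left-Engel hypothesis has and the right-Engel one lacks: it holds for \emph{every} conjugator, so one may replace $b$ by $bx$ with $x=[a^b,a]$. This produces the key identity $[x^{a^b},x^a]=[x,x^{aa^b}]$ (Lemma \ref{lm5}), and even then $x_3=1$ only follows from a further finishing argument: writing $u=[x,x^{aa^b}]$ one computes $u^a=u$, $u^{a^b}=u^{-1}$, deduces $\gamma_5=\langle u\rangle$, $\gamma_6=\langle u^2\rangle$, $\gamma_7=1$, and then uses the weight-$6$ identity $[[a^b,a,a,a^b],[a^b,a]]=u^{-1}\in\gamma_6$ to force $u=1$. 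Your ``carefully chosen supply of elements $g$'' consisting of left-normed commutators in $a,b$ does not contain the needed substitution (the element $bx$ is a product, not such a commutator), and nothing in the proposal anticipates the final order argument. So the plan, as stated, would stall precisely at $x_3$. Your side remark that left and right Engel conditions are not exchanged by inversion is correct, but it does not repair either gap.
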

\begin{thm}\label{th5}
Let $G$ be a group and  $a^{\pm 1}\in L_4(G)$ be a $p$-element
for some prime $p$. Then
\begin{enumerate}
    \item If $p=2$ then $a^4\in B(G)$.
    \item If $p$ is an odd prime, then $a^p\in B(G)$.
\end{enumerate}
\end{thm}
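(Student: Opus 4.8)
The plan is to show that $\langle c\rangle$ is subnormal in $G$, where $c=a^4$ when $p=2$ and $c=a^p$ when $p$ is odd; since a cyclic subnormal subgroup is contained in the Baer radical, this gives $c\in B(G)$. The starting point is Theorem~\ref{th4} combined with the hypothesis that $a$ is a $p$-element. Because the left $4$-Engel condition is inherited by conjugates, for any $g$ and any $i,j$ the subgroup $\langle a^{g^i},a^{g^j}\rangle=\langle a,a^{g^{j-i}}\rangle^{g^i}$ is nilpotent of class at most $4$ by Theorem~\ref{th4}. As all conjugates of $a$ have the same finite $p$-power order and a finitely generated nilpotent torsion group is finite, each such two-generator subgroup is in fact a finite $p$-group of class at most $4$; this is the arena in which all subsequent commutator computations will take place.

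Next I would localise at a single $g\in G$ and study the normal closure $N=\langle a\rangle^{\langle a,g\rangle}$. The relations $[g,_4a]=[g,_4a^{-1}]=1$ restrict the conjugation action of $a$, and I would use them together with the class-$4$ nilpotency just established to show that $N$ is generated by boundedly many conjugates of $a$, every pair of which generates a finite $p$-group of class at most $4$. The aim inside $\langle a,g\rangle$ is to build a subnormal series from $\langle c\rangle$ up to $\langle a,g\rangle$ whose defect is bounded independently of $g$.

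The essential difficulty, and the reason a proper power of $a$ cannot be avoided, is that $N$ need not be nilpotent, so $\langle a\rangle$ itself may fail to be subnormal in $\langle a,g\rangle$. I would measure this failure by collecting commutators among the generating conjugates inside the finite $p$-groups $\langle a^{g^i},a^{g^j}\rangle$, using the Hall--Petrescu formula to rewrite $p$-th powers (respectively fourth powers) modulo commutators of higher weight. The obstruction to subnormality is then concentrated in a few commutator sections of explicitly bounded exponent; for odd $p$ the $p$-th power of $a$ already kills these sections, whereas for $p=2$ the extra $2$-torsion carried by the free nilpotent group of class $4$ persists one step longer, which is exactly what forces $a^4$ rather than $a^2$. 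Pinning down these exponents precisely, and verifying that $c$ normalises the successive terms of the series, is the computational heart of the argument and the step I expect to be hardest.

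Finally, with $\langle c\rangle$ shown to be subnormal of defect bounded independently of $g$ in every $\langle a,g\rangle$, I would pass from this uniform local subnormality to subnormality of $\langle c\rangle$ in $G$ by a local criterion for subnormality of bounded defect, concluding that $c\in B(G)$. The two cases of the theorem then differ only through the exponent isolated in the preceding commutator analysis, so both are disposed of simultaneously.
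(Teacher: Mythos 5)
Your proposal defers precisely the content of the theorem to a computation that is never carried out. The existence of a subnormal series of bounded defect inside $\langle a,g\rangle$, the claim that $N=\langle a\rangle^{\langle a,g\rangle}$ is generated by boundedly many conjugates of $a$, and above all the exponent analysis that is supposed to distinguish $a^4$ (for $p=2$) from $a^p$ (for odd $p$) are all asserted, not proved; you yourself flag this as ``the computational heart'' and ``the step I expect to be hardest.'' Attributing the exponent $4$ to ``extra $2$-torsion carried by the free nilpotent group of class $4$'' is a hope, not an argument. The paper fills exactly this hole by a different mechanism: an explicit commutator computation (Lemma \ref{lm6} feeding into Lemma \ref{lm7}) shows that if $o(a)=p^i$ then $a^{p^{i-1}}\in L_2(G)$, where the hypotheses $i\geq 3$ for $p=2$ versus $i\geq 2$ for odd $p$ are where the two cases genuinely diverge. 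Iterating this (via Corollary \ref{co4}, which keeps all powers of $a$ left $4$-Engel) produces a normal series $1\trianglelefteq \langle a^{p^{i-1}}\rangle^G\trianglelefteq\cdots\trianglelefteq \langle a^p\rangle^G$ with abelian factors, so $K=\langle a^p\rangle^G$ is soluble; then Gruenberg's theorem \cite[Theorem 7.35]{robin} gives $B(K)=K$, and $K\trianglelefteq G$ yields $a^p\in K\leq B(G)$. No subnormality defect in $2$-generator subgroups is ever estimated.

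The second, independent gap is your final local-to-global step. Even if $\langle c\rangle$ were shown subnormal of uniformly bounded defect in every $\langle a,g\rangle$, there is no ``local criterion for subnormality of bounded defect'' valid in arbitrary groups that you can invoke: Wielandt's criterion (subnormality in $\langle H,g\rangle$ for all $g$ implies subnormality) is a theorem about \emph{finite} groups, and membership in the Baer radical requires $\langle c\rangle$ to be subnormal in $\langle c,g_1,\ldots,g_n\rangle$ for \emph{every} finite set of elements, not merely in two-generator subgroups. For defect at most $2$ the passage can be done by hand (each conjugate $c^g$ lies in $\langle c\rangle^{\langle c,g\rangle}$, hence normalises $\langle c\rangle$, so $\langle c\rangle\trianglelefteq\langle c\rangle^G\trianglelefteq G$), but from defect $3$ onward the elements one must conjugate by are products of conjugates of $c$ by \emph{different} elements $g_1,g_2,\ldots$, about which your local data in the separate subgroups $\langle a,g_i\rangle$ says nothing. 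This is precisely the difficulty the paper sidesteps by working with the single normal subgroup $\langle a^p\rangle^G$ of $G$ and using solubility plus Gruenberg's theorem, rather than attempting to glue together subnormality certificates from two-generator subgroups.
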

In \cite{traus,traus2} Traustason has proved the  above results for a 4-Engel group $G$ (in which all elements are
simultaneously right and left 4-Engel). The proofs of Theorems \ref{th3}, \ref{th4} and \ref{th5} are somehow
inspired by the arguments of \cite{traus,traus2}.

In Section 4, we will show that in Theorem \ref{th3}, we cannot remove the condition $b^{-1}\in
R_4(G)$ and  that in  Theorems \ref{th4}
and \ref{th5}, the condition $a^{-1}\in L_4(G)$ is an necessary condition.

Macdonald \cite{macd} has shown that the inverse or square of a
right 3-Engel element need not be right 3-Engel. In Section 4   the GAP
\cite{gap} implementation of Werner Nickel's nilpotent quotient
algorithm \cite{nick} is used  to prove that the inverse of a right (left, respectively)
4-Engel element is not necessarily  a right (left, respectively) 4-Engel element.

\section{\bf Right $4$-Engel elements}
In this section we prove Theorem \ref{th3}.
\begin{lem}\label{car}
Let G be a group with elements $a, b$. Let $x=a^b$. We have
\begin{enumerate}
\item[(a)] $[b^{-1}, a, a, a, a]=1$ if and only if $[x^{-1}, x^a]$ commutes with $x^a$.
\item[(b)]  $[b^{-1}, a^{\tau}, a^{\tau}, a^{\tau},a^{\tau}]=1$ for $\tau=1$ and $\tau=-1$ if and only if $\langle x^a,x\rangle$  is nilpotent of class at
most $2$.
\item[(c)] $[b^{\epsilon}, a^{\tau}, a^{\tau},  a^{\tau}, a^{\tau} ] = 1$ for all $\epsilon,\tau\in\{-1,+1\}$ if and only if
$\langle a,[a^b,a]\rangle$ and $\langle a^b, [a^b,a]\rangle$ are nilpotent of class at most $2$.
\end{enumerate}
\end{lem}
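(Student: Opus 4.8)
The plan is to make part (a) the computational core and then obtain (b) and (c) by applying (a) to the various sign choices and reinterpreting the resulting conditions inside the relevant two-generator subgroups. Throughout write $x=a^b$ and $y=x^a$. For (a) I would show that $[b^{-1},a,a,a,a]$ is a \emph{conjugate} of $[x^{-1},y,y]=[x^{-1},x^a,x^a]$, since the latter is trivial precisely when $[x^{-1},x^a]$ commutes with $x^a$. First conjugate by $b$: because $(b^{-1})^b=b^{-1}$ and $a^b=x$, we get $[b^{-1},a,a,a,a]^b=[b^{-1},x,x,x,x]$, and the elementary identity $[b^{-1},x]=a^{-1}x=[a,b]$ collapses this to $[[a,b],x,x,x]$. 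Then conjugate by $a$: from $[a,b]=a^{-1}x$ one computes $[a,b]^a=a^{-1}y$, so the expression becomes $[a^{-1}y,y,y,y]$, and the identities $[a^{-1}y,y]=[a^{-1},y]^y$ and $[a^{-1},y]=x^{-1}y$ (the latter from $y^{a^{-1}}=(x^a)^{a^{-1}}=x$) rewrite it as a conjugate of $[x^{-1},y,y]$. Keeping track of the conjugating factors then gives the equivalence.

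For (b) I would invoke (a) twice. Applied to $a$ it gives $[b^{-1},a,a,a,a]=1 \iff [x^{-1},y,y]=1$. Applied to $a^{-1}$ (where the relevant element is $(a^{-1})^b=x^{-1}$), and then conjugating back by $a$ into $\langle x,y\rangle$, it gives $[b^{-1},a^{-1},a^{-1},a^{-1},a^{-1}]=1 \iff [y,x^{-1},x^{-1}]=1$. Setting $t=[x,y]$, the identities $[x^{-1},y]=xt^{-1}x^{-1}$ and $[y,x^{-1}]=xtx^{-1}$ recast these as: $xt^{-1}x^{-1}$ commutes with $y$, and $xtx^{-1}$ commutes with $x$. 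The second is equivalent to $[t,x]=1$, i.e.\ $[x,y,x]=1$; granting this, $xt^{-1}x^{-1}=t^{-1}$ and the first becomes $[t,y]=1$, i.e.\ $[x,y,y]=1$. Thus both Engel conditions hold iff $t=[x,y]$ commutes with both generators, hence is central, which for a two-generator group is exactly $\langle x,y\rangle=\langle x^a,x\rangle\in\mathcal{N}_2$. The reverse implication is immediate, since class at most $2$ forces every weight-three commutator, in particular $[x^{-1},y,y]$ and $[y,x^{-1},x^{-1}]$, to be trivial.

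For (c) I would apply (b) to both $b$ and $b^{-1}$. The four conditions with $\epsilon=-1$ are exactly the hypotheses of (b), so they hold iff $\langle x,x^a\rangle\in\mathcal{N}_2$; and since $x^a=x[x,a]$ we have $\langle x,x^a\rangle=\langle a^b,[a^b,a]\rangle$. The four conditions with $\epsilon=+1$ are the hypotheses of (b) for $b^{-1}$, hence hold iff $\langle\bar x,\bar x^a\rangle\in\mathcal{N}_2$ with $\bar x=a^{b^{-1}}$; conjugating this subgroup by $b$ carries it to $\langle a,a^{x}\rangle=\langle a,[a,a^b]\rangle=\langle a,[a^b,a]\rangle$, and membership in $\mathcal{N}_2$ is preserved by this automorphism. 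Combining the two cases gives the asserted equivalence.

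The main obstacle is part (a): organizing the repeated commutators and their conjugating factors correctly through the two successive conjugations. A secondary subtlety, in (b), is that (a) delivers the \emph{twisted} conditions $[x^{-1},y,y]=1$ and $[y,x^{-1},x^{-1}]=1$ rather than the naive $[x,y,x]=1$ and $[x,y,y]=1$; the resolution is that one of the twisted conditions must be used to simplify the other before both can be read as centrality of $[x,y]$.
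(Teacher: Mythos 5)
Your proposal is correct and follows essentially the same route as the paper: the repeated conjugation in your part (a) is exactly an unwinding of the paper's "Engel shift" trick $[b^{-1},_n a]=1 \Leftrightarrow [a^{-1},_{n-1}a^b]=1$ applied twice, and your parts (b) and (c) reproduce the paper's sign substitutions and the conjugation by $b$. The only difference is that you spell out details the paper leaves implicit (the proof of the shift identity, and the step showing the two twisted conditions force $[x,y]$ to be central), and these details are correct.
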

\begin{proof}
We use  a general trick for
Engel commutators,  namely $$[b^{-1},_n a]=1 \Leftrightarrow [a^{-1},_{n-1} x] = 1 \;\; \text{where} \; x = a^b.$$
Applying this trick twice gives $$[b^{-1},_4 a]=1 \Leftrightarrow  [a^{-1},_3 x] = 1 \Leftrightarrow [x^{-1},_2 x^a] = 1.$$
This proves (a). To prove (b) note that we also have
$$[b^{-1},_4 a^{-1}]=1  \Leftrightarrow [a,_3 x^{-1}] = 1 \Leftrightarrow [x,_2 x^{-a^{-1}}]=1 \Leftrightarrow [x^{a},_2 x^{-1}]=1.$$
To show (c), it follows from (b) that  $\langle x,x^a\rangle=\langle a^b,(a^b)^a\rangle=\langle
a^b,[a^b,a]\rangle$ is nilpotent of class at most 2. Also we have $\langle a^{b^{-1}},[a^{b^{-1}},a]\rangle$ is nilpotent of class at most $2$.
Now the conjugate of the latter subgroup by $b$, it follows that $\langle a,[a^b,a]\rangle$. This completes the proof.
\end{proof}
As  immediate corollaries it follows then that
\begin{cor}\label{co1}
\begin{enumerate}
\item[(a)] $b^{\epsilon}\in R_4(G)$ for both $\epsilon=1$ and $\epsilon=-1$ if and only if $\langle a, [a^b, a]\rangle$ and $\langle a^b, [a^b, a]\rangle$ are nilpotent of class at most $2$ for all $a\in G$.
\item[(b)]    $a^{\epsilon}\in L_4(G)$ for both $\epsilon=1$ and $\epsilon=-1$ if and only if $\langle a, [a^b, a]\rangle$ and $\langle a^b, [a^b, a]\rangle$ are nilpotent of class at most $2$ for all $b\in G$.
\end{enumerate}
\end{cor}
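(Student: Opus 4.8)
The plan is to read both equivalences directly off Lemma \ref{car}(c), the only work being to unwind the universal quantifiers hidden in the definitions of $R_4(G)$ and $L_4(G)$. Recall that $b^{\epsilon}\in R_4(G)$ means $[b^{\epsilon},_4 g]=1$ for every $g\in G$, while $a^{\tau}\in L_4(G)$ means $[g,_4 a^{\tau}]=1$ for every $g\in G$. In each part one of the two elements feeding Lemma \ref{car}(c) is held fixed as the Engel element and the other runs over all of $G$. For part (a), fix $b$ with $b^{\pm1}\in R_4(G)$ and take $a$ arbitrary. Applying the $R_4$-condition to the specific elements $g=a$ and $g=a^{-1}$ yields $[b^{\epsilon},a^{\tau},a^{\tau},a^{\tau},a^{\tau}]=1$ for all four sign choices $\epsilon,\tau\in\{-1,+1\}$, whence Lemma \ref{car}(c) gives that $\langle a,[a^b,a]\rangle$ and $\langle a^b,[a^b,a]\rangle$ are nilpotent of class at most $2$; as $a$ was arbitrary, this holds for all $a\in G$. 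Conversely, assuming these two subgroups are nilpotent of class at most $2$ for every $a$, I would, given a test element $g\in G$, specialise $a=g$; Lemma \ref{car}(c) returns all four commutator relations, and the case $\tau=1$ reads $[b^{\epsilon},g,g,g,g]=1$ for both $\epsilon$. Since $g$ is arbitrary this is exactly $b^{\pm1}\in R_4(G)$. The clean point is that letting $a$ range over all of $G$ already supplies the $\tau=-1$ instances, so the only genuine extra hypothesis is that both $b$ and $b^{-1}$ be right $4$-Engel.

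Part (b) is the mirror image, with the fixed and free variables interchanged: now $a$ is the distinguished element with $a^{\pm1}\in L_4(G)$ and $b$ is the free parameter. Writing the left $4$-Engel condition as $[g,a^{\tau},a^{\tau},a^{\tau},a^{\tau}]=1$ for all $g\in G$ and both $\tau$, and substituting the particular values $g=b$ and $g=b^{-1}$, produces precisely the four relations on the left-hand side of Lemma \ref{car}(c). Here it is the $\epsilon=-1$ instances that come for free, because $g$ ranges over all of $G$, while the honest input is that both $a$ and $a^{-1}$ be left $4$-Engel. Applying Lemma \ref{car}(c) for each $b$ gives the stated equivalence, and the converse runs exactly as in part (a), by specialising $b$ to the prescribed test element and reading off the $\epsilon=1$ case.

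The only step demanding care — and the reason the two parts are not literally the same statement — is the asymmetry of which slot of $[b^{\epsilon},a^{\tau},a^{\tau},a^{\tau},a^{\tau}]$ is occupied by the Engel element: in the right-Engel case it is the leading entry $b$, whereas in the left-Engel case it is the repeated entry $a$. Correctly matching the sign conditions $\epsilon$ and $\tau$ of Lemma \ref{car}(c) to the ``both inverses'' hypothesis on the appropriate element is the whole content of the argument. Once this bookkeeping is in place no further computation is required, and the corollary follows immediately.
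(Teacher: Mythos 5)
Your proposal is correct and is exactly the derivation the paper intends: the corollary is stated as an immediate consequence of Lemma \ref{car}(c), obtained by quantifying the four commutator relations over the appropriate free variable ($a$ for the right-Engel case, $b$ for the left-Engel case) and specialising $g$ to $a^{\pm1}$ or $b^{\pm1}$ respectively. Your bookkeeping of which sign ($\tau$ or $\epsilon$) is supplied by the ``both inverses'' hypothesis and which by the universal quantifier matches the paper's setup.
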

\begin{cor}\label{co2}
Let $G$ be a group and $a,b\in G$ such that $[b^{\epsilon}, a^{\tau}, a^{\tau},  a^{\tau}, a^{\tau} ] = 1$ for all $\epsilon,\tau\in\{-1,+1\}$. Then
\begin{enumerate}
\item $\langle [a^b,a]\rangle^{\langle a\rangle}$ and $\langle [a^b,a]\rangle^{\langle a^b\rangle}$ are both abelian.
  \item $[a^b,a]^{a^2}=[a^b,a]^{2a-1}$
  \item $[a^b,a]^{a^{2b}}=[a^b,a]^{2a^b-1}$
  \item $[a^b,a]^{a^{-1}}=[a^b,a]^{-a+2}$
  \item $[a^b,a]^{a^{-b}}=[a^b,a]^{-a^b+2}$
  \item $[a^b,a]^{a^ba}=[a^b,a]^{-1+aa^b+1}$
  \item $[a^b,a]^{aa^ba}=[a^b,a]^{-1+2aa^b-a^b+1}$
  \item $[a^b,a]^{aa^{2b}}=[a^b,a]^{a^b+2aa^b-a-a^b}$
\end{enumerate}
\end{cor}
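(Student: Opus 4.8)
The plan is to set $c=[a^b,a]$ and work entirely inside the two class-$2$ subgroups furnished by Corollary \ref{co1}, namely $H_1=\langle a,c\rangle$ and $H_2=\langle a^b,c\rangle$, each nilpotent of class at most $2$ (note that Theorem \ref{th3} itself is not yet available, so Corollary \ref{co1} is the only structural input). For part (1) I would use that in a class-$2$ group every commutator is central: in $H_1$ the element $[c,a]$ is central, so $c^{a^i}=c\,[c,a]^i$ for all $i\in\mathbb{Z}$, whence any two $\langle a\rangle$-conjugates of $c$ commute. This gives that $\langle[a^b,a]\rangle^{\langle a\rangle}$ is abelian, and the identical argument in $H_2$ settles $\langle[a^b,a]\rangle^{\langle a^b\rangle}$.

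Once (1) is known, the cleanest bookkeeping for (2)--(5) is additive: regard the abelian group $\langle c\rangle^{\langle a\rangle}$ as a $\mathbb{Z}$-module on which conjugation by $a$ acts as an operator, so that $[c,a]=(a-1)c$ in this notation. The single input is that $[c,a]$ is fixed by $a$ (centrality in $H_1$), which reads $(a-1)^2c=0$, i.e. $a^2c=(2a-1)c$; this is exactly (2), and multiplying by $a^{-1}$ gives $a^{-1}c=(2-a)c$, which is (4). Running the same computation in $H_2$ with the operator $a^b$ yields $(a^b-1)^2c=0$, whose two rearrangements are (3) and (5). Thus (1)--(5) cost essentially nothing beyond the class-$2$ hypothesis.

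The real work is in (6)--(8), where conjugation by $a$ and by $a^b$ are mixed. Here I would bring in the two defining relations for $c$, namely $(a^b)^a=a^b c$ and $a^{a^b}=ac^{-1}$, which follow at once from $c=[a^b,a]$ (so that $(a^b)^{-1}(a^b)^a=c$) and from $c^{-1}=[a,a^b]=a^{-1}a^{a^b}$. These let one push a conjugation by $a^b$ past a conjugation by $a$ at the cost of a factor $c^{\pm1}$, after which the single-variable relations (2)--(5) collapse the outcome to the stated normal form. The main obstacle is precisely this interaction: the operators ``conjugate by $a$'' and ``conjugate by $a^b$'' do not commute on $c$, so one cannot simply add exponents, and the entire content of (6)--(8) is the exact correction produced when the two actions are interchanged. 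Concretely, the crux is to evaluate $[c,a]^{a^b}$ and $[c,a^b]^a$ from the relations above together with the centrality facts in $H_1$ and $H_2$, and then to check that the collected expressions match the right-hand sides term by term; this is a finite, if delicate, commutator computation rather than a conceptual difficulty.
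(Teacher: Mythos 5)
Your proposal is correct and is essentially the paper's own (unwritten) argument: the corollary is stated as an immediate consequence of Lemma \ref{car}(c), which under the stated pointwise hypothesis gives that $\langle a,[a^b,a]\rangle$ and $\langle a^b,[a^b,a]\rangle$ are nilpotent of class at most $2$, and items (1)--(8) then follow exactly as you describe, with (6)--(8) driven by the defining relation $(a^b)^a=a^b[a^b,a]$ together with (1)--(5). The only small correction is one of citation: the pointwise hypothesis of this corollary is covered by Lemma \ref{car}(c) rather than Corollary \ref{co1}, whose hypothesis is the global condition $b^{\pm1}\in R_4(G)$ for all $a\in G$.
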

We  use  the following result due to Sims \cite{sim}.
\begin{thm}\label{sim}
Let $F$ be the free group of rank $2$. Then the $5$-th term $\gamma_5(F)$  of the lower central series of $F$   is equal to $N_5$  the normal closure of the basic commutators of weight $5$.
\end{thm}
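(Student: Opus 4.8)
The plan is to establish the nontrivial inclusion $\gamma_5(F)\subseteq N_5$; the reverse inclusion $N_5\subseteq\gamma_5(F)$ is immediate, since each basic commutator of weight $5$ lies in the normal subgroup $\gamma_5(F)$ and $\gamma_5(F)$ is normal. Writing $F=\langle x_1,x_2\rangle$, I would first reduce the whole statement to the assertion that the finitely presented group $F/N_5$ is nilpotent, since $N_5$ is the normal closure of only finitely many elements. The bridge between nilpotence and the desired equality is a bootstrap argument explained below: once $\gamma_m(F)\subseteq N_5$ is known for a single $m$, the theorem follows.

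Two structural ingredients drive the argument. First, by the Hall basis theorem, $\gamma_5(F)/\gamma_6(F)$ is free abelian on the images of the basic commutators of weight $5$, and by Witt's formula there are exactly $\frac{1}{5}\sum_{d\mid 5}\mu(d)2^{5/d}=6$ of them; since these all lie in $N_5$ by definition, this gives $\gamma_5(F)=N_5\,\gamma_6(F)$. Second, because $N_5$ is normal in $F$ we have $[N_5,F]\subseteq N_5$, and a short commutator computation ($[ng,f]=[n,f]^g[g,f]$) turns the inclusion $\gamma_m\subseteq N_5\gamma_{m+1}$ into $\gamma_{m+1}=[\gamma_m,F]\subseteq N_5\gamma_{m+2}$; iterating from the base case $\gamma_5=N_5\gamma_6$ yields $\gamma_5(F)\subseteq N_5\,\gamma_m(F)$ for every $m$. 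Consequently, if $F/N_5$ is nilpotent, so that $\gamma_m(F)\subseteq N_5$ for some $m$, then $\gamma_5(F)\subseteq N_5\gamma_m(F)\subseteq N_5$, completing the proof.

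It therefore remains to prove that $F/N_5$ is nilpotent, for which it suffices to show $\gamma_6(F)\subseteq N_5$ (this already finishes via the first ingredient, as $\gamma_5=N_5\gamma_6=N_5$). The only basic commutators of weight $6$ not automatically in $N_5$ are the ``mixed'' ones whose outermost bracket has a second entry of weight $\ge 2$, namely the $4+2$ and $3+3$ brackets of lower basic commutators; a bracket $[c,x_i]$ of a weight-$5$ basic commutator $c$ with a generator is already in $N_5$, since $[c,x_i]=c^{-1}c^{x_i}$ and both factors lie in the normal subgroup $N_5$. I would attack the mixed commutators with the Hall--Witt identity, rewriting each such $[u,[x_2,x_1]]$ as a product of conjugates of commutators in which a generator occupies the final slot, thereby exhibiting it modulo $\gamma_7(F)$ as an element of $N_5$. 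The main obstacle is exactly that Hall--Witt only controls these expressions modulo higher-weight commutators, so one must simultaneously manage an a priori unbounded tail of correction terms rather than a single identity; this is precisely the finite but delicate collection computation showing that $F/N_5$ collapses to class $4$, which is the content of Sims' result and is independently confirmable by Nickel's nilpotent quotient algorithm.
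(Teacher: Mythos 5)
The first thing to note is that the paper contains no proof of this statement at all: it is quoted as a known theorem of Sims, with the citation \cite{sim} standing in for the argument. So your proposal has to be judged on its own terms. Its formal scaffolding is correct and cleanly organized: $N_5\subseteq\gamma_5(F)$ is indeed trivial; Hall's basis theorem (with Witt's formula giving the count of $6$) yields $\gamma_5(F)=N_5\gamma_6(F)$; since $N_5$ and each $\gamma_m(F)$ are normal, $[N_5\gamma_m(F),F]\subseteq[N_5,F][\gamma_m(F),F]\subseteq N_5\gamma_{m+1}(F)$, so iteration gives $\gamma_5(F)\subseteq N_5\gamma_m(F)$ for every $m$; and consequently nilpotence of $F/N_5$ (equivalently, the exact inclusion $\gamma_6(F)\subseteq N_5$) would finish the proof. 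All of this is sound.

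The genuine gap is that the nilpotence of $F/N_5$ --- the only nontrivial assertion in the theorem --- is never established. The Hall--Witt manipulation you describe produces memberships only modulo $\gamma_7(F)$, i.e.\ it proves $\gamma_6(F)\subseteq N_5\gamma_7(F)$; but that is already a consequence of your bootstrap and is strictly weaker than what is needed, and no amount of iterating such congruences closes the gap, because they only show $\gamma_6(F)\subseteq\bigcap_m N_5\gamma_m(F)$, and that intersection equals $N_5$ precisely when $F/N_5$ is residually nilpotent --- which is not known at that stage and is essentially what is to be proved. Writing that the remaining ``delicate collection computation \dots is the content of Sims' result'' is circular: that computation \emph{is} the theorem under proof. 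The fallback claim that Nickel's nilpotent quotient algorithm independently confirms it is also mistaken: an NQ computation determines the quotients $F/N_5\gamma_c(F)$, and their stabilization at class $4$ shows only that $\gamma_5(F)\subseteq N_5\gamma_c(F)$ for all $c$; it cannot certify that $F/N_5$ itself is nilpotent. Producing a finite, terminating certificate of \emph{exact} membership in $N_5$, rather than membership modulo ever higher terms of the lower central series, is exactly the verification problem Sims' paper solves, and it is the step your proposal would need to supply.
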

\begin{rem}\label{rem}{\rm Suppose that  $a,b$ are arbitrary elements of a group and let $H=\langle a,a^b\rangle$.  Then a set of basic commutators of weight 5 on $\{a,a^b\}$ is
$\{ x_1=[a^b,a,a,a,a]$, $x_2=[a^b,a,a,a,a^b]$,
$x_3=[a^b,a,a,a^b,a^b]$, $x_4=[a^b,a,a^b,a^b,a^b]$,
$x_5=[[a^b,a,a],[a^b,a]]$, $x_6=[[a^b,a,a^b],[a^b,a]]\}$.
Hence, by Theorem $\ref{sim}$, we have $\gamma_5(H)=\langle x_1,\dots,x_6 \rangle^H$. From on now, we fix and use the notation $x_1,\dots,x_6$ as the mentioned commutators.}
\end{rem}
In the following calculation, one must be careful with notation.
As usual $u^{g_1+g_2}$ is shorthand notation for
$u^{g_1}u^{g_2}$. This means that
$u^{(g_1+g_2)(h_1+h_2)}=u^{(g_1+g_2)h_1+{(g_1+g_2)h_2}}$ which does not have
to be equal to $u^{g_1(h_1+h_2)+g_2(h_1+h_2)}$. We also have that
$$u^{(g_1+g_2)(-h)}=((u^{g_1}.u^{g_2})^{-1})^h=u^{-g_2h-g_1h}.$$
 This does not have to be the same as $u^{-g_1h-g_2h}$.
\begin{lem}\label{lm2}
Let $G$ be a group and $a,b\in G$ such that $[b^{\epsilon},
a^{\tau}, a^{\tau},  a^{\tau}, a^{\tau} ] = 1$ for all
$\epsilon,\tau\in\{-1,+1\}$. Then
 $[a^b,a,a,a^b,a^b]=[x,x^{aa^b}]^{x^{aa^b}}$, where
$x=[a^b,a]$, and $\gamma_5(\langle a,a^b\rangle)=\langle
[a^b,a,a,a^b,a^b]\rangle^{\langle a,a^b \rangle}$.
\end{lem}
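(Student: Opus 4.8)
The plan is to set $H=\langle a,a^b\rangle$, $y=a^b$ and $x=[a^b,a]$, and to base everything on the two nilpotency facts the hypothesis supplies. By Lemma \ref{car}(c), the assumption $[b^{\epsilon},a^{\tau},a^{\tau},a^{\tau},a^{\tau}]=1$ for all $\epsilon,\tau\in\{-1,+1\}$ is equivalent to $\langle a,x\rangle$ and $\langle a^b,x\rangle=\langle y,x\rangle$ being nilpotent of class at most $2$. Writing $c=[x,a]=[a^b,a,a]$ and $d=[x,y]=[a^b,a,a^b]$, this says exactly that $c$ is central in $\langle a,x\rangle$ and $d$ is central in $\langle y,x\rangle$; equivalently $x^a=xc$ and $x^y=xd$ with $c,d$ commuting with $x$ (and with $a$, resp. $y$). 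These two identities, together with the relations recorded in Corollary \ref{co2}, are the only input I will use.

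For the second assertion I would first invoke Sims' theorem (Theorem \ref{sim}) in the form of Remark \ref{rem}, so that $\gamma_5(H)=\langle x_1,\dots,x_6\rangle^H$. The idea is then to show that five of these six basic commutators are trivial under the present hypothesis, leaving only $x_3=[a^b,a,a,a^b,a^b]$. Concretely, $[c,a]=1$ already gives $[a^b,a,a,a]=1$, which kills both $x_1$ and $x_2$; likewise $[d,y]=1$ gives $[a^b,a,a^b,a^b]=1$, which kills $x_4$; and since $c$ and $d$ are central in subgroups containing $x$, we get $x_5=[c,x]=1$ and $x_6=[d,x]=1$. Hence $\gamma_5(H)=\langle x_3\rangle^H$, as claimed. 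This half is the easy one, being essentially a bookkeeping of which weight-$5$ commutators survive the class-$2$ collapse of $\langle a,x\rangle$ and $\langle y,x\rangle$.

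For the first assertion I would compute $x_3=[c,y,y]$ directly. The natural first move is to expand $x^{aa^b}=(x^a)^y=(xc)^y=x^y c^y=(xd)c^y$ and apply the identity $[u,vw]=[u,w]\,[u,v]^w$ to get $[x,x^{aa^b}]=[x,c^y]\,[x,xd]^{c^y}$; since $[x,x]=1$ and $[x,d]=1$ the second factor disappears and one is left with the clean reduction $[x,x^{aa^b}]=[x,c^y]$. It then remains to identify $[x,c^y]^{x^{aa^b}}$ with $[c,y,y]$, and this is where the real work lies: one must rewrite both sides using the module-type relations (2)--(8) of Corollary \ref{co2}, keeping scrupulous track of the order of the exponents in the additive shorthand, exactly as the displayed warning before the lemma cautions (so that, e.g., $u^{(g_1+g_2)(-h)}=u^{-g_2h-g_1h}$ is not silently replaced by $u^{-g_1h-g_2h}$).

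I expect this last matching to be the main obstacle: the two sides agree only after a careful collection, and the outer conjugation by $x^{aa^b}$ in the target formula is precisely the residue of that collection, not something visible at the outset. Everything else --- the two central-commutator identities and the vanishing of $x_1,x_2,x_4,x_5,x_6$ --- is routine once the class-$2$ structure from Lemma \ref{car}(c) is in hand.
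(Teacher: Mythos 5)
Your treatment of the second assertion is correct and coincides with the paper's: Lemma \ref{car}(c) forces $x_1=x_2=x_4=x_5=x_6=1$, and Remark \ref{rem} then yields $\gamma_5(\langle a,a^b\rangle)=\langle x_3\rangle^{\langle a,a^b\rangle}$. Your preliminary reduction $[x,x^{aa^b}]=[x,c^y]$ is also valid, since it only uses $[x,x^{a^b}]=1$, which holds because $d=[x,a^b]$ is central in $\langle a^b,x\rangle$.

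The first assertion, however --- the identity $[a^b,a,a,a^b,a^b]=[x,x^{aa^b}]^{x^{aa^b}}$ --- is the actual content of the lemma, and your proposal stops exactly where that content begins. You write that ``it then remains to identify $[x,c^y]^{x^{aa^b}}$ with $[c,y,y]$, and this is where the real work lies,'' and then you do not do that work; you only predict that it will follow from a ``careful collection'' via Corollary \ref{co2}. That is not a proof. Concretely, one must verify that the expansion $[c,y,y]=x^{(-1+a)(-1+a^b)(-1+a^b)}=x^{-aa^b+a^b-1+a-aa^b+a^b-a^{2b}+aa^{2b}}$ collapses to $x^{-aa^b-1-aa^b+1+2aa^b}=[x,x^{aa^b}]^{x^{aa^b}}$. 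This requires relations (3) and (8) of Corollary \ref{co2} to rewrite $x^{-a^{2b}}$ and $x^{aa^{2b}}$, together with the specific commutation facts needed to cancel $a^b+1-2a^b+a^b$ to $1$ and to cancel the $a$ and $a^b$ terms inside the exponent --- none of which is automatic in the non-commutative exponent calculus the paper explicitly warns about. The paper carries out this multi-line exponent computation in full; your sketch neither performs it nor genuinely reduces it (the passage from $[x,x^{aa^b}]$ to $[x,c^y]$ is a restatement rather than a simplification). Until that computation is supplied, the first claim of the lemma remains unproved.
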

\begin{proof}
 By  Lemma \ref{car}, we have $x_1=x_2=x_4=x_5=x_6=1$. Now Remark \ref{rem} completes the proof of the second part.
To prove the first part,  by Corollary \ref{co2} we may write
\begin{eqnarray}
[a^b,a,a,a^b,a^b]&=&[x^{(-1+a)},a^b,a^b]\nonumber\\
&=&x^{(-1+a)(-1+a^b)(-1+a^b)}\nonumber\\
&=&x^{-aa^b+a^b-1+a-aa^b+a^b-a^{2b}+aa^{2b}}\nonumber\\
&=&x^{-aa^b+a^b-1+a-aa^b+1+2aa^b-a-a^b}\nonumber\\
&=&x^{-aa^b-1-aa^b+1+2aa^b}\nonumber\\
&=&x^{-aa^b}[x,x^{aa^b}]x^{aa^b}\nonumber
\end{eqnarray}
\end{proof}
\noindent{\bf Proof Of Theorem \ref{th3}.}
By Corollary \ref{co2}, $\langle
[a^b,a]\rangle^{\langle a\rangle}$ is an abelian group generated
by $[a^b,a]$ and $[a^b,a]^a$.
As $[a^b,a^{-1}]=[a^b,a]^{-a^{-1}}$, the subgroup $\langle a, [a^b,a^{-1}]\rangle$ is nilpotent of class at most $2$ for all $a\in G$.   Thus by replacing $a$ by $a^{-1}$ in the latter group, we have that
$$\langle a,[a^{-b},a]\rangle=\langle
a,[a^b,a]^{-a^{-b}}\rangle=\langle a,[a^b,a]^{-a^b+2}\rangle$$ is
nilpotent of class at most 2 for every element $a\in G$. Now if
$x=[a^b,a]$, it is enough to show that $[x^{a^b},x^a]=1$.  Since $\langle a,x^{-a^b+2}\rangle$ is
nilpotent of class at most 2, we have
\begin{eqnarray}
1&=&[x^{-a^b+2},a,a]\nonumber\\
&=&x^{(-a^b+2)(-1+a)(-1+a)}\nonumber\\
&=&x^{(-2+a^b-a^ba+2a)(-1+a)}\nonumber\\
&=&x^{(-3+a^b-aa^b+1+2a)(-1+a)}\nonumber\\
&=&x^{-2a-1+aa^b-a^b+3-3a+a^ba-aa^ba+a+2a^2}\nonumber\\
&=&x^{aa^b-a^b+2-3a+a^b-aa^b+3a-2}\nonumber\\
&=&x^{-aa^b-2+aa^b+2}x^{-a^b-3a+a^b+3a}\nonumber\\
&=&[x^{aa^b},x^2][x^{a^b},x^{3a}].\nonumber
\end{eqnarray}
Therefore $[x^2,x^{aa^b}]=[x^{a^b},x^{3a}]\hspace*{1.05cm}~~(1)$.

Also
\begin{eqnarray}
1&=&[x^{-a^b+2},a,x^{-a^b+2}]\nonumber\\
&=&[x^{(-a^b+2)(-1+a)},x^{-a^b+2}]\nonumber\\
&=&[x^{(-2+a^b-a^ba+2a)},x^{-a^b+2}]\nonumber\\
&=&[x^{(-3+a^b-aa^b+1+2a)},x^{-a^b+2}]\nonumber\\
&=&[x^{-aa^b+2a},x^{-a^b+2}]\nonumber\\
&=&[x^{-aa^b},x^{-a^b+2}]^{x^{2a}}[x^{2a},x^{-a^b+2}]\nonumber\\
&=&[x^{-aa^b},x^2][x^{2a},x^{-a^b}]\nonumber\\
&=&[x^2,x^{aa^b}][x^{a^b},x^{2a}].\nonumber
\end{eqnarray}
Thus $[x^2,x^{aa^b}]=[x^{a^b},x^{2a}]^{-1}\hspace*{1.05cm}~~~(2)$.

Furthermore
\begin{eqnarray}
1&=&[x^{a^b-2},a,a]\nonumber\\
&=&x^{(a^b-2)(-1+a)(-1+a)}\nonumber\\
&=&x^{(1-a^b+aa^b+1-2a)(-1+a)}\nonumber\\
&=&x^{2a-1-aa^b+a^b-1+a-a^ba+aa^ba+a-2a^2}\nonumber\\
&=&x^{2a-1-aa^b+a^b-1+a-1-aa^b+1-1+2aa^b-a^b+1+a-4a+2}\nonumber\\
&=&x^{-aa^b+a^b-2+a+aa^b-a^b+2-a}\nonumber\\
&=&x^{-aa^b-2+aa^b+2}x^{-a^b-a+a^b+a}\nonumber\\
&=&[x^{aa^b},x^2][x^{a^b},x^a].\nonumber
\end{eqnarray}
Therefore $[x^2,x^{aa^b}]=[x^{a^b},x^a]\hspace*{1.05cm}~~(3)$.

Now by (3),  $[x^{a^b},x^a]^{x^a}=[x^{a^b},x^a]$ and by (1) and (3)
we have $[x^{a^b},x^a]^3=[x^{a^b},x^{3a}]=[x^{a^b},x^a]$. Hence
$[x^{a^b},x^a]^2=1~~(*)$. Also by (2) and (3) we have
$[x^{a^b},x^a]^{-2}=[x^{a^b},x^{2a}]^{-1}=[x^{a^b},x^a]$. Thus
$[x^{a^b},x^a]^3=1~~(**)$. Now it follows from  $(*)$ and $(**)$ that
$[x^{a^b},x^a]=1$. This completes the proof. $\hfill \Box$
\section{\bf Left 4-Engel elements}
In this section we prove Theorems \ref{th4} and \ref{th5}.
The argument of Lemma \ref{lm5} is very much modeled on an argument given
in \cite{traus}.
\begin{lem}\label{lm5}
Let $G$ be any group. If $a^{\pm 1}\in L_4(G)$, then $[x^{a^b},x^a]=[x,x^{aa^b}]$, where $x=[a^b,a]$ for all $b\in G$.
\end{lem}
\begin{proof}
From  Corollary \ref{co1}-(b) we have $\langle a,[a^b,a]\rangle \in \mathcal{N}_2$
for all $b\in G$.
Thus $\langle a,[a^{bx},a]\rangle \in \mathcal{N}_2$. Therefore $[a^{bx},a^{-1},a,a]=1$.
 We have
\begin{eqnarray}
[a^{bx},a]&=&[x^{-1}a^bx,a]\nonumber\\
&=&[x^{-1}a^b,a]^x[x,a]\nonumber\\
&=&[x^{-1},a]^{a^bx}[a^b,a]x^{-1}x^a\nonumber\\
&=&(xx^{-a})^{a^bx}x^a\nonumber\\
&=&x^{-1-aa^b+a^b+1+a}.\nonumber
\end{eqnarray}
Let $y=[a^{bx},a^{-1}]=[a^{bx},a]^{-a^{-1}}$. Then
\begin{eqnarray}
y&=&x^{-1-a^{-1}-a^ba^{-1}+aa^ba^{-1}+a^{-1}}\nonumber\\
&=&x^{-1-a^{-1}+(a^{-1}-a^{-1}a^b-a^{-1})+(a^{-1}+a^b-a^{-1})+a^{-1}}\nonumber\\
&=&x^{-1+aa^b-a^b}.\nonumber
\end{eqnarray}
We then have $y^{-a}=[a^{bx},a]=x^{-1-aa^b+a^b+1+a}$ and
\begin{eqnarray}
y^{a^2}&=&x^{-a^2-a-a^ba+aa^ba+a}\nonumber\\
&=&x^{-3a+aa^b-a^b+1+a}.\nonumber
\end{eqnarray}
Therefore
\begin{eqnarray}
1&=&y^{-a}yy^{-a}y^{a^2}\nonumber\\
&=&x^{-1-aa^b+a^b+1+a}x^{-1+aa^b-a^b}x^{-1-aa^b+a^b+1+a}x^{-3a+aa^b-a^b+1+a}\nonumber\\
&=&x^{-1+a^b+a-a^b}x^{-1-aa^b+a^b+1-2a+aa^b-a^b+1+a}\nonumber\\
&=&x^{-1+a^b-1-aa^b+1-a+aa^b-a^b+1+a}\nonumber\\
&=&x^{-1+a^b}x^{-1-aa^b+1+aa^b}x^{-a-a^b+a+a^b}x^{-a^b+1}\nonumber\\
&=&x^{-1+a^b}[x,x^{aa^b}][x^a,x^{a^b}]x^{-a^b+1}.\nonumber
\end{eqnarray}
Conjugation with $x^{-1+a^b}$ gives
$$1=[x,x^{aa^b}][x^a,x^{a^b}].$$
\end{proof}
\noindent{\bf Proof of Theorem \ref{th4}}. By Lemma \ref{lm2} we have to prove that
$x_3=1$.
 Let $u=[x^{a^b},x^a]=[x,x^{aa^b}]$. By Lemma \ref{lm2}
and Lemma \ref{lm5} we have $x_3=u^{x^{aa^b}}=u$.  Thus
$$\gamma_5(\langle a,a^b\rangle)=\langle u\rangle^{\langle a,a^b\rangle}.$$
Since
\begin{eqnarray}
u^a&=&[x^a,x^{aa^ba}]\nonumber\\
&=&[x^a,x^{-1+2aa^b-a^b+1}]\nonumber\\
&=&[x^a,x^{-a^b}]\nonumber\\
&=&u\nonumber
\end{eqnarray}
and
\begin{eqnarray}
u^{a^b}&=&[x^{a^{2b}},x^{aa^b}]\nonumber\\
&=&[x^{2a^b-1},x^{aa^b}]\nonumber\\
&=&[x^{-1},x^{aa^b}]\nonumber\\
&=&u^{-1}\nonumber
\end{eqnarray}
we have $\gamma_5(\langle a,a^b\rangle)=\langle u\rangle$ and
\begin{eqnarray}
\gamma_6(\langle a,a^b\rangle)&=&[\langle a,a^b\rangle,
\gamma_5(\langle a,a^b\rangle)]\nonumber\\
&=&[\langle a,a^b\rangle,\langle u\rangle]\nonumber\\
&=&\langle u^2\rangle. \nonumber
\end{eqnarray}
Also
$1=[u,a^b,a^b]=[u^{-2},a^b]$ and we have $\gamma_7(\langle
a,a^b\rangle)=1$. On the other hand
\begin{eqnarray}
[[a^b,a,a,a^b],[a^b,a]]&=&[x^{-a+1-a^b+aa^b},x]\nonumber\\
&=&[x^{aa^b},x]\nonumber\\
&=&u^{-1}\in \gamma_6(\langle a,a^b\rangle).\nonumber
\end{eqnarray}
Thus $u=1$ and this completes the proof. $\hfill \Box$
\begin{cor}\label{co3}
Let $G$ be a group and $a^{\pm 1}\in L_4(G)$. Then $\langle
a,a^b\rangle'$ is abelian, for all $b \in G$.
\end{cor}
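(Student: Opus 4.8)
The final statement is Corollary \ref{co3}: under the hypothesis $a^{\pm 1}\in L_4(G)$, the derived subgroup $\langle a,a^b\rangle'$ is abelian for every $b\in G$. The plan is to exploit the nilpotency-of-class-4 conclusion already established in Theorem \ref{th4} together with the explicit description of the upper terms of the lower central series obtained along the way. Since Theorem \ref{th4} gives $\gamma_5(\langle a,a^b\rangle)=1$, the group $H=\langle a,a^b\rangle$ is nilpotent of class at most $4$, so $H'=\gamma_2(H)$ sits inside a $4$-step nilpotent group; I want to show this commutator subgroup has trivial commutators among its own elements.

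First I would set $H=\langle a,a^b\rangle$ and recall from Corollary \ref{co1}-(b) (specialized via the hypothesis) that $\langle a,[a^b,a]\rangle$ and $\langle a^b,[a^b,a]\rangle$ both lie in $\mathcal{N}_2$. Writing $x=[a^b,a]$, the derived subgroup $H'$ is generated (as a normal subgroup, modulo higher terms) by $x$ together with its conjugates, and by the class-$4$ bound all iterated commutators of weight $\geq 5$ vanish. The key reduction is that $H'$ is generated by $x$ and a small set of conjugates of $x$ under $a$ and $a^b$; the relations of Corollary \ref{co2}, which express conjugates such as $x^{a^2}$, $x^{a^{-1}}$, $x^{a^b a}$, etc.\ as integer-combination exponents of $x$, should let me write every generator of $H'$ in terms of $x,x^a,x^{a^b},x^{aa^b}$ and their commutators.

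The heart of the argument is to check that the commutators among these generators already vanish. Here I would invoke the two structural identities just proved: Lemma \ref{lm5} gives $[x^{a^b},x^a]=[x,x^{aa^b}]$, and the computation inside the proof of Theorem \ref{th4} shows this common value $u$ satisfies $u^a=u$, $u^{a^b}=u^{-1}$, and ultimately $u=1$. Thus the basic weight-$4$ commutator $[x,x^{aa^b}]$ is trivial, and combined with the $\mathcal{N}_2$-conditions on $\langle a,x\rangle$ and $\langle a^b,x\rangle$ (which force $[x,x^a]=[x,x^{a^b}]=1$ and similar), every pairwise commutator of the generators of $H'$ should collapse to $1$. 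Concretely, I would verify $[x^{g},x^{h}]=1$ for $g,h$ ranging over the relevant monomials in $a,a^b$, reducing each such commutator via Corollary \ref{co2} to a product of copies of $u$ and of the already-trivial commutators inside the two $\mathcal{N}_2$-subgroups.

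The main obstacle I anticipate is bookkeeping rather than conceptual: the exponent arithmetic in the group ring $\mathbb{Z}[H/H']$-style notation (as flagged in the remark preceding Lemma \ref{lm2} about noncommutativity of the $u^{g}$ shorthand) is delicate, and one must be careful that conjugation and the additive exponent notation interact correctly when reducing a commutator like $[x^{aa^b},x^{a^b a}]$. I expect the cleanest route is to first establish that $H'$ is generated by the finite conjugate set $\{x,x^a,x^{a^b},x^{aa^b}\}$ using the class bound, then show the commutator of any two of these is a power of $u=1$ by direct substitution of the Corollary \ref{co2} identities, so that $H'$ is abelian. This gives the corollary for all $b\in G$ since $b$ was arbitrary. $\hfill\Box$
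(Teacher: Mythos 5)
Your proposal is correct, but it takes a substantially more computational route than the paper needs. The paper gives no explicit proof of Corollary \ref{co3} because it is intended as an immediate consequence of Theorem \ref{th4}: the group $H=\langle a,a^b\rangle$ is generated by \emph{two} elements, and for any $2$-generator group one has the classical containment $H''\leq\gamma_5(H)$ (since $H'=\langle [a^b,a]\rangle\gamma_3(H)$, so $H''$ is generated modulo conjugation by commutators lying in $[\gamma_2(H),\gamma_3(H)]\,[\gamma_3(H),\gamma_3(H)]\leq\gamma_5(H)$); Theorem \ref{th4} gives $\gamma_5(H)=1$, hence $H''=1$ at once. Your argument instead re-derives this fact by hand in the specific situation: you use the class-$4$ bound to show $H'$ is generated by $x,x^a,x^{a^b},x^{aa^b}$ (where $x=[a^b,a]$), and then kill the six pairwise commutators using Corollary \ref{co2}, Lemma \ref{lm5}, and the vanishing of $u=[x,x^{aa^b}]$ established inside the proof of Theorem \ref{th4}. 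This does work: $[x,x^a]$ and $[x,x^{a^b}]$ vanish by the two $\mathcal{N}_2$-conditions, $[x,x^{aa^b}]=[x^a,x^{a^b}]=u=1$, $[x^{a^b},x^{aa^b}]=[x,x^a]^{a^b}=1$, and $[x^a,x^{aa^b}]=1$ follows by conjugating $[x,x^{a^b}]=1$ by $a$ and invoking identity (6) of Corollary \ref{co2} together with $[x^a,x]=1$. So your plan is sound and all the reductions you anticipate do go through; what it costs is considerable bookkeeping (which you correctly flag), and what it buys is an explicit generating set for $H'$ with explicit trivial relations. The structural one-liner $H''\leq\gamma_5(H)$ for $2$-generator groups renders all of that unnecessary, and recognizing it would have saved you the entire computation.
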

\begin{cor}\label{co4}
Let $G$ be an arbitrary group and $a^{\pm 1}\in L_4(G)$. Then
every power of $a$ is also a left $4$-Engel element.
\end{cor}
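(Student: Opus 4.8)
The plan is to deduce this from Corollary \ref{co1}-(b), applied with $a^k$ in place of $a$. That criterion says that $(a^k)^{\pm 1}\in L_4(G)$ if and only if, for every $b\in G$, both $\langle a^k,[(a^k)^b,a^k]\rangle$ and $\langle (a^k)^b,[(a^k)^b,a^k]\rangle$ are nilpotent of class at most $2$. Since $(a^k)^b=(a^b)^k$, I set $w=[(a^b)^k,a^k]$ and $N=\langle a,a^b\rangle$; then everything lies inside $N$, which is nilpotent of class at most $4$ by Theorem \ref{th4} and has abelian derived subgroup by Corollary \ref{co3}. Because $w\in N'$ and $N'$ is abelian, the normal closures $\langle w\rangle^{\langle a^k\rangle}$ and $\langle w\rangle^{\langle (a^b)^k\rangle}$ are automatically abelian, so the two class-$2$ requirements collapse to the two identities $[w,a^k,a^k]=1$ and $[w,(a^b)^k,(a^b)^k]=1$.

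The crux is the claim that $[N',a,a]=1$ and, symmetrically, $[N',a^b,a^b]=1$. Granting this, the rest is a formal module computation. Since $N'$ is abelian I write it additively and let $\alpha$ denote the automorphism of $N'$ induced by conjugation by $a$; then $[N',a,a]=1$ reads $(\alpha-1)^2=0$ on $N'$. Using the factorization $(\alpha^k-1)=(\alpha-1)(1+\alpha+\cdots+\alpha^{k-1})$ together with the fact that powers of $\alpha$ commute as operators, I obtain $(\alpha^k-1)^2=(1+\alpha+\cdots+\alpha^{k-1})^2(\alpha-1)^2$, which annihilates $w\in N'$. Hence $[w,a^k,a^k]=(\alpha^k-1)^2w=1$, and the identical argument with $a^b$ in place of $a$ gives $[w,(a^b)^k,(a^b)^k]=1$ (negative exponents are covered by the same factorization in $\mathbb{Z}[\alpha,\alpha^{-1}]$, or simply by running the argument with $a^{-1}$, which also lies in $L_4(G)$). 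Corollary \ref{co1}-(b) then yields $(a^k)^{\pm1}\in L_4(G)$, and since $k$ is arbitrary, every power of $a$ is a left $4$-Engel element.

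To prove the claim I start from $[\,[a^b,a],a,a\,]=1$, which is exactly the assertion that $\langle a,[a^b,a]\rangle$ is nilpotent of class at most $2$ (Corollary \ref{co1}-(b) under the hypothesis $a^{\pm1}\in L_4(G)$); additively this is $(\alpha-1)^2x=0$ for $x=[a^b,a]$. As $N$ is $2$-generated, its derived subgroup is the normal closure $N'=\langle x\rangle^N$, so it suffices to check that $(\alpha-1)^2$ kills each conjugate $x^h$ with $h\in N$. Writing $x^h=x\,[x,h]$ with $[x,h]\in\gamma_3(N)$ and using additivity gives $(\alpha-1)^2x^h=(\alpha-1)^2x+(\alpha-1)^2[x,h]$; the first term vanishes by the previous identity, and the second vanishes because conjugation minus identity raises lower-central degree, so that $(\alpha-1)^2\gamma_3(N)\subseteq\gamma_5(N)=1$. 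The statement for $a^b$ is obtained by the symmetric argument. I expect this upgrade — from $(\alpha-1)^2x=0$ on the single generator $x$ to $(\alpha-1)^2=0$ on all of $N'$, by absorbing the $\gamma_3$-correction terms into $\gamma_5(N)=1$ — to be the only genuine obstacle; once it is secured, the passage from $a$ to $a^k$ is entirely routine.
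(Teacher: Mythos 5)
Your proof is correct, but it follows a genuinely different route from the paper's own. The paper argues by subgroup containment: citing Corollary \ref{co2}, Theorem \ref{th4} and Corollary \ref{co3}, it asserts that the single $3$-generated subgroup $\langle a^b,[a^b,a],[a^b,a]^a\rangle$ lies in $\mathcal{N}_2$, observes that $\langle a^{ib},[a^{ib},a^i]\rangle$ (where $a^{ib}$ denotes $(a^i)^b$) is contained in that subgroup for all $i\in\mathbb{Z}$ and $b\in G$, and then invokes Corollary \ref{co1}. You instead isolate the structural fact $[N',a,a]=[N',a^b,a^b]=1$ for $N=\langle a,a^b\rangle$, i.e.\ $(\alpha-1)^2=0$ on the abelian group $N'$ where $\alpha$ is conjugation by $a$, proved by absorbing the correction terms $[x,h]\in\gamma_3(N)$ into $\gamma_5(N)=1$; powers of $a$ are then handled by the operator identity $(\alpha^k-1)^2=(1+\alpha+\cdots+\alpha^{k-1})^2(\alpha-1)^2$ rather than by containment, with negative exponents covered since $\alpha^{-k}-1$ is a unit multiple of $\alpha^k-1$. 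The two arguments rest on the same ingredients (Theorem \ref{th4}, Corollary \ref{co3}, Corollaries \ref{co1} and \ref{co2}) and their computations correspond closely: the verification, left implicit in the paper, that $\langle a^b,[a^b,a],[a^b,a]^a\rangle$ has class at most $2$ amounts to exactly your $\gamma_5$-absorption step, while your polynomial factorization is the module-theoretic translation of the paper's expansion of $[a^{ib},a^i]$ into conjugates of $[a^b,a]$. What the paper's packaging buys is brevity; what yours buys is a fully explicit, self-contained argument, a stronger intermediate statement (conjugation by $a$ acts quadratically trivially on all of $N'$, not merely on $\langle [a^b,a]\rangle^{\langle a\rangle}$), and a uniform treatment of all powers. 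Your individual reductions also check out: the collapse of the class-$2$ conditions to $[w,a^k,a^k]=[w,(a^b)^k,(a^b)^k]=1$ is legitimate because $[a^k,w]$ automatically lies in the abelian normal subgroup $N'$, and $N'=\langle [a^b,a]\rangle^N$ is the standard fact for $2$-generator groups.
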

\begin{proof}
By Corollary \ref{co2}, Theorem \ref{th4} and Corollary \ref{co3},
$$\langle a^b,[a^b,a],[a^b,a]^a\rangle=\langle
a^b,[a^b,a],[a^b,a,a]\rangle\in \mathcal{N}_2$$  for all $b\in G$. It follows that $\langle a^{ib},[a^{ib},a^i]\rangle \leq
\langle a^b,[a^b,a],[a^b,a]^a\rangle$ and $\langle
a^{ib},[a^{ib},a^i]\rangle\in\mathcal{N}_2$ for all $b\in G$ and
$i\in \Z$.  Now Corollary \ref{co1} implies that  $a^i\in L_4(G)$ for
all $i\in \Z$.
\end{proof}
\begin{lem}
Let $G$ be a group and $a^{\pm 1}\in L_4(G)$. Then for all $b$ in
$G$ and $r,m,n\in \N$ we have
\begin{enumerate}\label{lm6}
    \item $[a^b,a^r]^{a^{2m}+1}=[a^b,a^r]^{2a^m}$
    \item $[a^{nb},a^r]^{a^{2mb}+1}=[a^{nb},a^r]^{2a^{mb}}$
    \item If $a^s=1$ then $[a^b,a,a^s]=[a^b,a,a]^s$
    \item $[a^b,a]^{a^n}=[a^b,a]^{na-(n-1)}$
    \item $[a^b,a]^{a^{nb}}=[a^b,a]^{na^b-(n-1)}$
\end{enumerate}
\end{lem}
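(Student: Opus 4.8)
The plan is to fix $b$, work entirely inside $H=\langle a,a^b\rangle$, and reduce every one of the five assertions to a polynomial identity in a commutative operator ring. By Theorem \ref{th4} the group $H$ is nilpotent of class at most $4$, and by Corollary \ref{co3} its derived subgroup $H'$ is abelian. Since $H/H'$ is abelian, conjugation by elements of $H$ factors through $H/H'$ (an element of $H'$ acts trivially on the abelian group $H'$), so $H'$ becomes a module over the commutative group ring $\mathbb{Z}[H/H']$. In particular the two operators $\alpha$ (``conjugate by $a$'') and $\gamma$ (``conjugate by $a^b$'') commute on $H'$. Writing $H'$ additively and setting $x=[a^b,a]\in H'$, relations (2) and (3) of Corollary \ref{co2} become $\alpha^2x=(2\alpha-1)x$ and $\gamma^2x=(2\gamma-1)x$, i.e.
$$(\alpha-1)^2x=0,\qquad(\gamma-1)^2x=0.$$
This commutative-module picture is precisely what neutralizes the non-commutativity warning preceding Lemma \ref{lm2}: because $H'$ is abelian the additive exponent calculus behaves like an honest commutative ring, so all manipulations below are legitimate.

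First I would record the single piece of ring arithmetic that is needed. If $\phi\in\mathbb{Z}[H/H']$ satisfies $(\phi-1)^2x=0$, then expanding $\phi^k=(1+(\phi-1))^k$ by the binomial theorem and discarding the terms divisible by $(\phi-1)^2$ gives $\phi^kx=(1+k(\phi-1))x$ for every $k$, and likewise $(\phi^k-1)^2x=0$, since $(\phi^k-1)^2=(\phi-1)^2(\phi^{k-1}+\cdots+1)^2$ and the ring is commutative. Taking $\phi=\alpha$ yields $[a^b,a]^{a^n}=\alpha^nx=(1+n(\alpha-1))x=[a^b,a]^{na-(n-1)}$, which is (4); taking $\phi=\gamma$ gives (5) in the same way.

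Next I would expand the mixed commutators by the standard collection identities $[u,v^r]=\prod_{i=0}^{r-1}[u,v]^{v^i}$ and $[u^n,v]=\prod_{i=0}^{n-1}[u,v]^{u^i}$, which are genuine group identities; every factor lies in the abelian subgroup $\langle x\rangle^{H}\subseteq H'$, so each of $[a^b,a^r]$ and $[a^{nb},a^r]$ equals $x^{P}$ for a suitable $P\in\mathbb{Z}[H/H']$. For (1) set $\phi=\alpha$ and for (2) set $\phi=\gamma$; in either case the asserted identity $z^{\phi^{2m}+1}=z^{2\phi^{m}}$ with $z=x^{P}$ reads $x^{P(\phi^{2m}+1)}=x^{2P\phi^{m}}$, and the two exponents differ by
$$P\bigl(\phi^{2m}+1-2\phi^{m}\bigr)=P(\phi^{m}-1)^2.$$
Since $(\phi^{m}-1)^2$ is a multiple of $(\phi-1)^2$ and $P$ commutes with it, $P(\phi^{m}-1)^2x=0$, proving (1) and (2). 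Finally (3) is immediate from (4): $[a^b,a,a^s]=x^{\alpha^s-1}=(\alpha^s-1)x$ and $[a^b,a,a]^s=s(\alpha-1)x$ coincide by the case $n=s$ of (4), and when $a^s=1$ the operator $\alpha^s$ is the identity, so both sides vanish.

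The genuinely delicate point is the first paragraph, not the algebra that follows: one must be certain that the additive exponent calculus is valid, that is, that all the conjugates of $x$ appearing in the collection formulas actually commute. This is exactly where the abelianness of $H'$ (Corollary \ref{co3}) is indispensable, and it is also what lets one treat $\alpha$ and $\gamma$ as commuting indeterminates subject only to $(\alpha-1)^2x=(\gamma-1)^2x=0$. Once that framework is in place the remaining content is the elementary observation that $\phi^{2m}+1-2\phi^{m}=(\phi^{m}-1)^2$ is annihilated on $x$, so no separate hard step remains.
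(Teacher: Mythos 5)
Your proof is correct, and at bottom it runs on the same fuel as the paper's: the abelianness of $\langle [a^b,a]\rangle^{\langle a,a^b\rangle}$ (Corollary \ref{co3}/\ref{co2}) together with the relations $[x,a,a]=[x,a^b,a^b]=1$ from Corollary \ref{co2}. The packaging differs slightly. The paper proves (1) by exhibiting the containment $\langle a^m,[a^b,a^r]\rangle\leq\langle a,[a^b,a],[a^b,a]^a\rangle$, invoking the class-$2$ nilpotency of the larger subgroup (which comes from Theorem \ref{th4} and Corollary \ref{co3}, as in the proof of Corollary \ref{co4}) to get $[a^b,a^r,a^m,a^m]=1$, and then expanding; parts (3)--(5) are dismissed as ``straightforward by Corollary \ref{co2} and induction.'' You instead make the underlying mechanism explicit: $H'$ is a module over the commutative ring $\mathbb{Z}[H/H']$, the hypotheses give $(\alpha-1)^2x=(\gamma-1)^2x=0$, and every assertion reduces to the factorization $\phi^{2m}-2\phi^m+1=(\phi^m-1)^2\in(\phi-1)^2\mathbb{Z}[H/H']$ or to the binomial expansion of $(1+(\phi-1))^n$. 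What your version buys is uniformity and transparency: all five parts (including the ones the paper leaves to the reader) follow from one ring-theoretic observation, and the commutative-module framing cleanly justifies the additive exponent calculus that the paper's own warning before Lemma \ref{lm2} flags as dangerous in general. What the paper's version buys is brevity and the reusable intermediate fact that the specific subgroups $\langle a^m,[a^b,a^r]\rangle$ and $\langle a^{mb},[a^{nb},a^r]\rangle$ lie in $\mathcal{N}_2$, which it quotes again later. Both are valid, and neither has a gap.
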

\begin{proof}
By Corollary \ref{co2}, $[a^b,a]^{\langle a\rangle}$ and
$[a^b,a]^{\langle a^b\rangle}$ are both abelian and $$\langle
a^{mb},[a^{nb},a^r]\rangle\leq \langle
a^b,[a^b,a],[a^b,a]^a\rangle$$ for all $b$ in $G$. Therefore both
$\langle a^m,[a^b,a^r]\rangle$ and $ \langle
a^{mb},[a^{nb},a^r]\rangle$ are nilpotent of class at most 2, for
all $b\in G$ and $r,m,n\in \N$. Thus
\begin{eqnarray}
1=[a^b,a^r,a^m,a^m]&=&[a^b,a^r]^{(-1+a^m)^2}\nonumber\\
&=&[a^b,a^r]^{1-2a^m+a^{2m}}.\nonumber
\end{eqnarray}
This prove part (1). Part (2) is similar to part (1) and the other
parts are straightforward by Corollary \ref{co2} and induction.
\end{proof}
\begin{lem}\label{lm7}
Let $G$ be a group and $a^{\pm 1}\in L_4(G)$. If $o(a)=p^i$, for
$p=2$ and $i\geq 3$ or some odd prime $p$ and $i\geq 2$, then
$a^{p^{i-1}}\in L_2(G)$.
\end{lem}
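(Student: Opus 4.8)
The plan is to unwind the definition of $L_2$. Since $L_2(G)=\{z\in G\mid \langle z\rangle^G \text{ is abelian}\}$, proving $a^{p^{i-1}}\in L_2(G)$ amounts to showing $[a^{p^{i-1}},(a^{p^{i-1}})^b]=1$ for every $b\in G$. Writing $u=a$, $v=a^b$ and $n=p^{i-1}$, and using $(a^{p^{i-1}})^b=(a^b)^{p^{i-1}}=v^n$, the whole statement reduces to $[u^n,v^n]=1$ for all $b\in G$. I would work inside $H=\langle a,a^b\rangle$. By Theorem \ref{th4} (using $a^{\pm1}\in L_4(G)$) the group $H$ is nilpotent of class at most $4$, and by Corollary \ref{co3} the subgroup $H'$ is abelian. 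Hence conjugation makes $H'$ a module over the commutative ring $\Z[H^{ab}]$, in which the images of $u,v$ commute; I would write this action additively with commuting operators $u,v$ and set $U=u-1$, $V=v-1$. With $x=[a^b,a]=[v,u]$ and $y=Ux$, $z=Vx$, $w=UVx$, Corollary \ref{co2} supplies the relations $U^2x=0$ and $V^2x=0$.

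The first computational step is standard commutator collection, legitimate because $H'$ is abelian: one gets $[u^n,v]=x^{-s}$ and then $[u^n,v^n]=x^{-st}$, where $s=1+u+\cdots+u^{n-1}$ and $t=1+v+\cdots+v^{n-1}$. Because $U^2x=0$ and $V^2x=0$ one has $x^{u^j}=x+jy$ and $x^{v^k}=x+kz$, and expanding termwise gives
$$(st)\cdot x \;=\; n^2x+n\binom{n}{2}(y+z)+\binom{n}{2}^{2}w.$$
So it suffices to prove that this element is $0$; the problem is now purely a torsion computation in the abelian group spanned by $x,y,z,w$.

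Next I would extract the torsion relations forced by $o(a)=o(a^b)=p^i$. Since $u^{p^i}=1$ acts trivially on $H'$, and symmetrically for $v$, one obtains $p^iy=p^iz=p^iw=0$ (this is also visible from Lemma \ref{lm6}(3) applied with $a^{p^i}=1$). From $[u^{p^i},v]=1$ one gets $s'x=0$ where $s'=1+u+\cdots+u^{p^i-1}\equiv p^i+\binom{p^i}{2}U$ on the span of $x,y,z,w$; this is the relation $p^ix+\binom{p^i}{2}y=0$. The crucial extra relation comes from multiplying $s'x=0$ by $V$: commutativity of the action gives $s'z=0$, hence $\binom{p^i}{2}w=0$, and since $2^i-1$ is odd while $w$ is a $p$-element this sharpens, for $p=2$, to $2^{i-1}w=0$. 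With these in hand I plug $n=p^{i-1}$ into the display and check each term dies. For odd $p$ (and $i\ge2$) one first deduces $p^ix=0$ because $\binom{p^i}{2}$ is a multiple of $p^i$, and then $2i-2\ge i$ annihilates $n^2x$, $n\binom{n}{2}(y+z)$ and $\binom{n}{2}^2w$ in turn. For $p=2$ (and $i\ge3$) one uses $2^ix=2^{i-1}y$ together with $2^{i-1}w=0$ to kill the three terms. Either way $(st)\cdot x=0$, so $[u^n,v^n]=1$ for all $b$, and therefore $a^{p^{i-1}}\in L_2(G)$.

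The main obstacle I anticipate is precisely the borderline case $p=2$, $i=3$: there $n=4$, the first two terms vanish easily, but the last one is $\binom{4}{2}^2w=36w\equiv 4w$, and the naive bound $8w=0$ is not enough. The argument lives or dies on producing the sharper relation $2^{i-1}w=0$, and my proposed route to it is to push $s'x=0$ through the operator $V$ to reach $\binom{p^i}{2}w=0$. The only other delicate point is keeping the exponent bookkeeping honest in the collection step — the warning before Lemma \ref{lm2} about $u^{(g_1+g_2)(-h)}$ — but once $H'$ is known to be abelian the exponents genuinely multiply as in a commutative ring, so the passage to $(st)\cdot x$ is justified.
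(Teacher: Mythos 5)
Your proof is correct, but it is organized quite differently from the paper's. Both arguments rest on the same ingredients — Theorem \ref{th4}/Corollary \ref{co3} (so that $H'=\langle a,a^b\rangle'$ is abelian) and the class-2 relations of Corollary \ref{co2} — and both ultimately reduce to showing $[a^{mb},a^m]=1$ with $m=p^{i-1}$ (the paper verifies the left $2$-Engel condition $[b,a^m,a^m]=1$, you verify commutativity of conjugates; these agree by the characterization of $L_2$ in the introduction, and indeed the paper's proof passes through $[a^{mb},a^m]=1$ as well). The difference is in how that identity is established. The paper works multiplicatively with the bespoke relations of Lemma \ref{lm6} and runs two rather different computations: for $p=2$ it first proves $[a^b,a^{2^{i-1}}]^2=1$ and then uses the ``halving'' identities \ref{lm6}(1),(2); for odd $p$ it proves $[a^b,a]^{p^i}=1$ and exploits $p^i\mid m^2$. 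You instead package $H'$ as a module over the commutative ring $\Z[H^{ab}]$, reduce everything to the span of $x,y,z,w$ with $U^2x=V^2x=0$, and obtain the single uniform expansion $st\cdot x=n^2x+n\binom{n}{2}(y+z)+\binom{n}{2}^2w$, after which the two cases differ only in the final divisibility check. Your crucial extra relation $\binom{p^i}{2}w=0$, obtained by pushing $s'x=0$ through the operator $V$, is exactly what handles the borderline case $p=2$, $i=3$ (where the naive bound $2^iw=0$ fails to kill $\binom{n}{2}^2w$), and it plays the structural role that Lemma \ref{lm6}(2) plays in the paper. What your route buys is uniformity and a transparent view of where $i\geq3$ (resp.\ $i\geq2$) is needed; what the paper's route buys is that it never leaves elementary commutator calculus and needs no module formalism. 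One small remark: your reduction of $a^{m}\in L_2(G)$ to $[a^m,(a^m)^b]=1$ for all $b$ is legitimate but deserves the one-line justification that $\langle z\rangle^G$ is abelian precisely when all conjugates of $z$ commute pairwise, which by conjugation is the stated condition.
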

\begin{proof}
First let $p=2$ and $m=p^{i-3}$. Then $a^{8m}=1$ and we have so
\begin{eqnarray}
1=[a^b,a^{8m}]&=&[a^b,a^{4m}]^{1+a^{4m}}\nonumber\\
&=&[a^b,a^{4m}]^2.~~\hspace*{1.05cm}~~~ \text{by Lemma}~
\ref{lm6}\nonumber
\end{eqnarray}
Now we have
$$[b,a^{p^{i-1}},a^{p^{i-1}}]=[b,a^{4m},a^{4m}]=[a^{4mb},a^{4m}]^{-a^{4m(-b+1)}}.$$
But
\begin{eqnarray}
[a^{4mb},a^{4m}]&=&[a^{2mb},a^{4m}]^{a^{2mb}+1}\nonumber\\
&=&[a^{2mb},a^{4m}]^{2a^{mb}} \hspace*{1.05cm}~~~\text{by Lemma}\; \ref{lm6}\nonumber\\
&=&[a^b,a^{4m}]^{2(a^{2m-1}+\cdots+a^b+1)a^{mb}}\nonumber\\
&=&1.\nonumber
\end{eqnarray}
This complete the proof of the lemma in this case. Now let $p$ be
an odd prime number and $i\geq 2$. Then we have
\begin{eqnarray}
1&=&[a^b,a^{p^i}]\nonumber\\
&=&[a^b,a]^{1+a+\cdots+a^{p^i-1}}\nonumber\\
&=&[a^b,a]^{1+a+2a-1+\cdots+(p^i-1)a-(p^i-2)} \hspace*{1.05cm} \text{by Lemma}~ \ref{lm6}\nonumber\\
&=&[a^b,a]^{\frac{p^i(p^i-1)}{2}a-\frac{p^i(p^i-1)}{2}+p^i}\nonumber\\
&=&[a^b,a]^{(p^ia-p^i)\frac{p^i-1}{2}}[a^b,a]^{p^i}\nonumber
\end{eqnarray}
Now since
$$1=[a^b,a,a^{p^i}]=[a^b,a,a]^{p^i}=[a^b,a]^{p^i a-p^i}
\hspace*{.5cm} \text{by Lemma  \ref{lm6} part (3)}$$ we have
$[a^b,a^{p^i}]=[a^b,a]^{p^i}=1$. Let $m=p^{i-1}$. Then
$$[b,a^m,a^m]=[a^{mb},a^m]^{-a^{-mb+m}}$$
and by Corollary \ref{co3} and Lemma \ref{lm6} we have so
\begin{eqnarray}
[a^{mb},a^m]&=&[a^b,a^m]^{(1+a^b+\cdots+a^{(m-1)b})}\nonumber\\
&=&[a^b,a^{m}]^{(1+a^b+2a^b-1+\cdots+(m-1)a^b-(m-2))}\nonumber\\
&=&[a^b,a]^{(1+a+2a-1+\cdots+(m-1)a-(m-2))(1+a^b+2a^b-1+\cdots+(m-1)a^b-(m-2))}
~\nonumber\\
&=&[a^b,a]^{(\frac{m(m-1)}{2}a-\frac{m(m-1)}{2}+m)(\frac{m(m-1)}{2}a^b-\frac{m(m-1)}{2}+m)}\nonumber\\
&=&[a^b,a]^{m^2(\frac{m-1}{2}a-\frac{m-3}{2})(\frac{m-1}{2}a^b-\frac{m-3}{2})}.\nonumber
\end{eqnarray}
Now since $p^i\mid m^2$ and $[a^b,a]^{p^i}=1$ we have
$$[b,a^{p^{i-1}},a^{p^{i-1}}]=[b,a^m,a^m]=1.$$
This complete the proof of the lemma.
\end{proof}
\noindent{\bf Proof of Theorem \ref{th5}.}
 Let $p$ be a prime number and $o(a)=p^i$.
If $p=2$ and $i\leq 2$ then the assertion is obvious. Therefore
let $i\geq 3$ if $p=2$; and $i\geq 2$ if $p$ is an odd prime
number. By Lemma \ref{lm7} and Corollary \ref{co1}
$$1\trianglelefteq \langle a^{p^{n-1}}\rangle^G \trianglelefteq
\langle a^{p^{n-2}}\rangle^G\trianglelefteq \cdots\trianglelefteq
\langle a^p\rangle^G$$ is a series of normal subgroups of $G$ with
abelian factors. This implies that $K=\langle a^p\rangle^G$ is
soluble of derived length at most $n-1$. By Corollary \ref{co4},
$a^p$ and so all its conjugates in $G$ belong to $L_4(G)$ and in
particular they are in $L_4(K)$. Now a result of Gruenberg
\cite[Theorem 7.35]{robin} implies that $B(K)=K$. Therefore
$a^p\in K\leq B(G)$, as required. $\hfill \Box$

\section{\bf Examples and questions}
In this section we give some examples by using {\sf GAP nq} package of Werner Nickel to show what we mentioned in
the last two paragraphs of Section 1.\\

  Let $H$ be the largest nilpotent group
generated by $a,b$ such that $a\in R_4(H)$; then $H$ is nilpotent
of class 8. On the other hand if $K$ is the largest nilpotent
group generated by $a$ and $b$ such that $a^{\pm 1}\in
R_4(K)$, then $K$ is nilpotent of class 7. Thus in an arbitrary
group $G$, $a\in R_4(G)$ does not imply $a^{-1}\in R_4(G)$. One
can check the above argument with the following GAP program:
\begin{verbatim}
LoadPackage("nq"); #nq package of Werner Nickel#
F:=FreeGroup(3);; a:=F.1;; b:=F.2;; x:=F.3;;
G:=F/[LeftNormedComm([a,x,x,x,x])];; H:=NilpotentQuotient(G,[x]);;
NilpotencyClassOfGroup(H);
G:=F/[LeftNormedComm([a,x,x,x,x]),LeftNormedComm([a^-1,x,x,x,x])];;
K:=NilpotentQuotient(G,[x]);; NilpotencyClassOfGroup(K);
\end{verbatim}
Similar to above let $N$ be the largest nilpotent group generated
by $a,b$ such that $a,b\in L_4(N)$ and $b^2=1$. Also let
$S=\langle a,a^b\rangle$. Then $N$ is nilpotent of class 10 and
$S$ is nilpotent of class 6 but the largest nilpotent group $M$
generated by $a,b$ such that $a^{\pm 1},b\in L_4(M)$ and $b^2=1$
is nilpotent of class 7. Therefore in an arbitrary group $G$, $a\in L_4(G)$ does not
imply $a^{-1}\in L_4(G)$ and $b^{-1}\in L_4(G)$ is a necessary
condition in Theorem \ref{th4} and \ref{th5}. The following GAP
program confirms the above argument:
\begin{verbatim}
F:=FreeGroup(3);; a:=F.1;; b:=F.2;; x:=F.3;;
G:=F/[LeftNormedComm([x,a,a,a,a]),LeftNormedComm([x,b,b,b,b]),b^2];;
N:=NilpotentQuotient(G,[x]);; NilpotencyClassOfGroup(N);
S:=Subgroup(N,[N.1,N.2^-1*N.1*N.2]);; NilpotencyClassOfGroup(S);
G:=F/[LeftNormedComm([x,a,a,a,a]),LeftNormedComm([x,b,b,b,b]),
LeftNormedComm([x,a^-1,a^-1,a^-1,a^-1]), b^2];;
M:=NilpotentQuotient(G,[x]);; NilpotencyClassOfGroup(M);
\end{verbatim}

We end this section by proposing some questions on bounded right Engel elements in certain classes of groups.
\begin{qu}\label{ql1}
Let $n$ be a positive integer. Is there a set of prime numbers $\pi_n$ depending only on $n$ and a function $f:\mathbb{N}\rightarrow \mathbb{N}$ such that the nilpotency class of $\langle x \rangle ^G$ is at most $f(n)$ for any $\pi_n'$-element $x\in R_n(G)$ and any nilpotent or finite group $G$?
\end{qu}
 \begin{qu}\label{ql2}
 Let $n$ be a positive integer. Is there a set of prime numbers $\pi_n$ depending only on $n$ such that the set of right $n$-Engel elements in any nilpotent or finite  $\pi'_n$-group forms a subgroup?
 \end{qu}
 Note that if the answers of Questions \ref{ql1} and \ref{ql2} are positive, the answers of the corresponding questions for residually (finite or nilpotent $\pi'_n$-groups) are also positive.
 \begin{qu}
 Let $n$ and $d$ be positive integers. Is there a function $g:\mathbb{N}\times \mathbb{N}\rightarrow \mathbb{N}$ such that any nilpotent group generated by $d$ right $n$-Engel elements is nilpotent of class at most $g(n,d)$?
 \end{qu}

{\noindent\bf Acknowledgements.} The authors are grateful to the referee for his very helpful comments. They also thank Yaov Segev for pointing out a flaw in our proofs.
The research of the first author was supported in part by the Center of Excellence for Mathematics, University of Isfahan.

\end{document}